\newtheorem{theorem}{Theorem}[section]
\newtheorem{lemma}[theorem]{Lemma}
\theoremstyle{definition}
\newtheorem{definition}[theorem]{Definition}
\newtheorem{remark}[theorem]{Remark}
\theoremstyle{remark}
\renewcommand{\theclaim}{\textup{\theclaim}}
\numberwithin{equation}{section}
\def\openone%{\hbox{\upshape \small1\kern-3.3pt\normalsize1}}
\newbox\ipbox
\newcommand{\diracb}[1]{\left\langle #1\mathrel{\mathchoice

{\setbox\ipbox=\hbox{$\displaystyle \left\langle\mathstrut
#1\right.$}

\vrule height\ht\ipbox width0.25pt depth\dp\ipbox}

{\setbox\ipbox=\hbox{$\textstyle \left\langle\mathstrut
#1\right.$}

\vrule height\ht\ipbox width0.25pt depth\dp\ipbox}

{\setbox\ipbox=\hbox{$\scriptstyle \left\langle\mathstrut
#1\right.$}

\vrule height\ht\ipbox width0.25pt depth\dp\ipbox}

{\setbox\ipbox=\hbox{$\scriptscriptstyle \left\langle\mathstrut
#1\right.$}

\vrule height\ht\ipbox width0.25pt depth\dp\ipbox}

}\right. }
\newcommand{\dirack}[1]{\left. \mathrel{\mathchoice

{\setbox\ipbox=\hbox{$\displaystyle \left.\mathstrut
#1\right\rangle$}

\vrule height\ht\ipbox width0.25pt depth\dp\ipbox}

{\setbox\ipbox=\hbox{$\textstyle \left.\mathstrut
#1\right\rangle$}

\vrule height\ht\ipbox width0.25pt depth\dp\ipbox}

{\setbox\ipbox=\hbox{$\scriptstyle \left.\mathstrut
#1\right\rangle$}

\vrule height\ht\ipbox width0.25pt depth\dp\ipbox}

{\setbox\ipbox=\hbox{$\scriptscriptstyle \left.\mathstrut
#1\right\rangle$}

\vrule height\ht\ipbox width0.25pt depth\dp\ipbox}

} #1\right\rangle}
\newcommand{\Ru}{\mathbb{R}}
\newcommand{\bz}{\mathbb{Z}}
\newcommand{\br}{\mathbb{R}}
\def\blfootnote{\xdef\@thefnmark{}\@footnotetext}
\renewcommand{\mod}{\operatorname{mod}}
\def\T{\mathbb{T}}
\def\-{^{-1}}
\def\D{\mathcal{D}}
\def\T{\mathcal{T}}
\def\S{\mathcal{S}}
\begin{document}
\title[Bessel sequences of exponentials on fractal measures]{Bessel sequences of exponentials on fractal measures}
\author{Dorin Ervin Dutkay}
\blfootnote{}
\address{[Dorin Ervin Dutkay] University of Central Florida\\
    Department of Mathematics\\
    4000 Central Florida Blvd.\\
    P.O. Box 161364\\
    Orlando, FL 32816-1364\\
U.S.A.\\} \email{ddutkay@mail.ucf.edu}

\author{Deguang Han}
\address{[Deguang Han]University of Central Florida\\
    Department of Mathematics\\
    4000 Central Florida Blvd.\\
    P.O. Box 161364\\
    Orlando, FL 32816-1364\\
U.S.A.\\} \email{dhan@mail.ucf.edu}

\author{Eric Weber}
\address{[Eric Weber]Department of Mathematics\\
396 Carver Hall\\
Iowa State University\\
Ames, IA 50011\\
U.S.A.\\} \email{esweber@iastate.edu}

\thanks{}
\subjclass[2000]{28A80,28A78, 42B05} \keywords{fractal, iterated function system, frame, Bessel sequence, Riesz basic sequence, Beurling dimension}

\begin{abstract}
Jorgensen and Pedersen have proven that a certain fractal measure
$\nu$ has no infinite set of complex exponentials which form an orthonormal
set in $L^2(\nu)$.  We prove that any fractal measure $\mu$ obtained from an
affine iterated function system possesses a sequence of complex exponentials which
forms a Riesz basic sequence, or more generally a Bessel sequence, in $L^2(\mu)$ such
that the frequencies have positive Beurling dimension.
\end{abstract}
\maketitle \tableofcontents

%\begin{enumerate}
%\item bland intro
%\item Jorgensen and Pedersen
%\item Bessel criterion and RBS criterion
%\item non-Bessel set on $C_3$
%\item Beurling dimension
%\end{enumerate}
\section{Introduction}
In \cite{JP98}, Jorgensen and Pedersen prove two surprising results:
\begin{enumerate}
\item there exists a singular Borel probability measure $\sigma$ such that there exists a sequence $\{\lambda_{n}\}_{n=0}^{\infty} \subset \Ru$
such that the functions $e_{\lambda_{n}}(x) := e^{2 \pi i \lambda_{n} x}$ is an orthonormal basis for $L^2(\sigma)$; \item The Hausdorff measure
$\nu$ on the middle third Cantor set has the following property: for any three $\{ \lambda_{1}, \lambda_{2}, \lambda_{3} \} \subset \Ru$, the
set $\{e_{\lambda_{1}}, e_{\lambda_{2}}, e_{\lambda_{3}} \} \subset L^2(\nu)$ is not orthogonal.
\end{enumerate}
In both cases, the measure
arises as the (unique) invariant measure under an iterated function system \cite{Hut81}.
 We prove, in contradistinction to item (ii) above, that every measure $\mu$ arising from
 a suitable iterated function system possesses an infinite sequence $\{\lambda_{n} \}_{n=0}^{\infty}$ such
 that the sequence $\{ e_{\lambda_{n}}\}_{n=0}^{\infty}$ is a Riesz basic sequence in $L^2(\mu)$.  Moreover,
 this sequence has positive Beurling dimension.

Frames were introduced by Duffin and Schaeffer \cite{DuSc52} in the context of nonharmonic Fourier series, and today they have applications in a
wide range of areas. Frames provide robust, basis-like representations of vectors in a Hilbert space. The potential redundancy of frames often
allows them to be more easily constructible than bases, and to possess better properties than those that are achievable using bases. For
example, redundant frames offer more resilience to the effects of noise or to erasures of frame elements than bases. Following Duffin and
Schaeffer a Fourier frame or frame of exponentials is a frame of the form $\{e^{2\pi i\lambda·x}\}_{\lambda\in\Lambda}$ for the Hilbert space
$L^2[0, 1]$. Fourier frames are also closely connected with sampling sequences or complete interpolating sequences \cite{OSANN}.

\begin{definition}
A sequence $\{x_n\}_{n=1}^{\infty}$ in a Hilbert space (with inner product $\langle \cdot , \cdot \rangle $) is \emph{Bessel} if there exists a positive constant $B$ such that
\[ \sum_{n=1}^{\infty} | \langle v , x_n \rangle |^2 \leq B \|v\|^2. \]
This is equivalent to the existence of a positive constant $D$ such that for every finite sequence $\{c_{1}, \dots , c_{K} \}$ of complex numbers
\[ \| \sum_{n=1}^{K} c_{n} x_n \| \leq D \sqrt{\sum_{n=1}^{K} |c_{n}|^2}. \]
Here $D^2 = B$ is called the Bessel bound.

The sequence is a frame if in addition to being a Bessel sequence there exists a positive constant $A$ such that
\[ A \| v\|^2 \leq \sum_{n=1}^{\infty} | \langle v , x_n \rangle |^2 \leq B \|v\|^2. \]
In this case, $A$ and $B$ are called the lower and upper frame bounds, respectively.

The sequence is a Riesz basic sequence if in addition to being a Bessel sequence there exists a positive constant $C$ such that for every finite sequence $\{c_{1}, \dots , c_{K} \}$ of complex numbers
\[ C \sqrt{\sum_{n=1}^{K} |c_{n}|^2} \leq \| \sum_{n=1}^{K} c_{n} x_n \| \leq D \sqrt{\sum_{n=1}^{K} |c_{n}|^2}. \]
Here $C$ and $D$ are called the lower and upper basis bounds, respectively.

\end{definition}
The main result of Duffin and Schaeffer is a sufficient density condition for $\{e^{2\pi i\lambda\cdot x}\}_{\lambda\in \Lambda}$ to be a frame for $L^2[0,1]$. Landau \cite{MR0222554}, Jaffard
\cite{Jaffard} and Seip \cite{Seip2} ``almost"  characterize the frame properties of $\{e^{2\pi i\lambda\cdot x}\}_{\Lambda\in \Lambda}$ in
terms of lower Beurling density:
$$
\mathcal D^-(\Lambda):= \liminf_{h\rightarrow\infty}\inf_{x\in \br}\frac{\#(\Lambda\cap[x-h,x+h])}{2h}.
$$

\begin{theorem}\label{th1.1} For $\{e^{2\pi i\lambda\cdot x}\}_{\Lambda\in \Lambda}$ to be a frame for $L^2[0,1]$, it is necessary that $\Lambda$ is relatively separated
and $\mathcal D^-(\Lambda)\geq 1$,  and it is sufficient that $\Lambda$ is relatively separated and $\mathcal D^-(\Lambda)> 1$.
\end{theorem}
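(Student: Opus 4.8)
The plan is to pass, via the Fourier transform, from a frame-of-exponentials statement about $L^2[0,1]$ to a sampling statement about the Paley--Wiener space, and then to invoke the two deep density estimates---Landau's for necessity and Beurling's for sufficiency---in that language. For $g\in L^2[0,1]$ set $G(z)=\int_0^1 g(x)e^{2\pi i zx}\,dx$, an entire function of exponential type whose restriction to $\br$ lies in the Paley--Wiener space $PW=\{F\in\ltwor:\supp{\fo F}\subseteq[0,1]\}$. By Plancherel $\|G\|_{\ltwor}=\|g\|_{L^2[0,1]}$, and $\langle g,e_\lambda\rangle=G(-\lambda)$, so $\{e_\lambda\}_{\lambda\in\Lambda}$ is a frame for $L^2[0,1]$ if and only if $A\|G\|^2\le\sum_{\lambda\in\Lambda}|G(-\lambda)|^2\le B\|G\|^2$ for all $G\in PW$, i.e. if and only if $-\Lambda$ is a set of sampling for $PW$. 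Since $\mathcal D^-$ is invariant under $\lambda\mapsto-\lambda$, it suffices to characterize sampling sets for $PW$ by their lower Beurling density.

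For necessity of relative separation I would test the Bessel (upper) inequality against the normalized reproducing kernels of $PW$, which are translates of the $\operatorname{sinc}$ function and hence are bounded below on a fixed neighborhood of their center; the bound $\sum_\lambda|k_w(\lambda)|^2\le B\,k_w(w)$ then forces $\sup_x\#(\Lambda\cap[x,x+1])<\infty$, which is relative separation. For the necessity of $\mathcal D^-(\Lambda)\ge 1$ I would run Landau's eigenvalue-counting argument. For a long interval $I=[x-h,x+h]$ let $P$ be the orthogonal projection onto $PW$ and $M_{\mathbf 1_I}$ multiplication by $\mathbf 1_I$; by the Landau--Pollak--Slepian asymptotics the concentration operator $PM_{\mathbf 1_I}P$ has about $2h$ eigenvalues near $1$ (the time--bandwidth product, the band having measure $1$). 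The lower frame bound forces the samples over $I$ to reproduce every function concentrated on $I$, so $\#(\Lambda\cap I)\ge 2h-o(h)$; dividing by $2h$ and passing to the liminf gives $\mathcal D^-(\Lambda)\ge 1$.

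For sufficiency, the upper (Bessel) bound follows from relative separation alone, via the Plancherel--P\'olya inequality: a finite union of uniformly discrete sets satisfies $\sum_\lambda|G(\lambda)|^2\le B\|G\|^2$ for all $G\in PW$. The lower sampling inequality $A\|G\|^2\le\sum_\lambda|G(-\lambda)|^2$ is the heart of the matter, and it is here that the density hypothesis $\mathcal D^->1$ must be converted into a uniform stability constant $A>0$. I would invoke Beurling's sampling theorem, whose proof sweeps (``balayages'') the mass of $G$ onto the sampling set and controls the resulting error by a min--max estimate for entire functions of exponential type; the refinements of Jaffard and Seip extend this to relatively separated $\Lambda$ and sharpen the threshold to the strict inequality.

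The decisive difficulty is exactly this last step: no elementary overlap or perturbation estimate turns a density lower bound into a uniform sampling constant, so one genuinely needs the complex-analytic machinery of functions of exponential type together with Beurling's balayage technique. The difficulty is also structural---the gap between the necessary condition $\mathcal D^-\ge 1$ and the sufficient condition $\mathcal D^->1$ cannot be removed, since at the critical density $1$ both sampling sets (e.g. $\bz$, which gives the orthonormal basis $\{e_n\}$) and non-sampling sets (e.g. $\bz\setminus\{0\}$, which is incomplete) occur. This is precisely why Theorem \ref{th1.1} is only an ``almost'' characterization.
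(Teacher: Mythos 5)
The paper never proves Theorem \ref{th1.1}: it is quoted as background, attributed to Landau, Jaffard and Seip (with the critical case $\mathcal D^-(\Lambda)=1$ referred to Ortega-Cerd\`a and Seip), and nothing in Section \ref{bess} touches it. So there is no internal proof to compare yours against; the relevant comparison is with the cited literature. Your reduction is the standard and correct bridge: the map $g\mapsto G(z)=\int_0^1 g(x)e^{2\pi i zx}\,dx$ is unitary onto the Paley--Wiener space, $\langle g,e_\lambda\rangle=G(-\lambda)$, and $\mathcal D^-$ is invariant under $\lambda\mapsto-\lambda$, so the frame property is equivalent to $-\Lambda$ being a set of sampling; the reproducing-kernel test for relative separation and the Plancherel--P\'olya inequality for the Bessel half are likewise correct and standard. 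But you should be clear that the two pillars you then ``invoke'' --- Landau's eigenvalue-counting argument for the necessity of $\mathcal D^-\geq 1$, and the Beurling--Jaffard--Seip sufficiency at $\mathcal D^->1$ --- are precisely the theorems the paper is citing, merely restated in sampling language. Judged as a self-contained argument, those are genuine gaps: Landau's necessity requires the full trace and trace-norm comparison for the concentration operators $PM_{\mathbf 1_I}P$ (your one-line version, that the samples must ``reproduce every function concentrated on $I$,'' is a heuristic, not an argument), and the sufficiency step needs either Beurling's balayage construction carried out in full or, at minimum, an extraction lemma showing that a relatively separated set with $\mathcal D^->1$ contains a separated subset still of density $>1$ before Beurling's theorem (stated for separated sets) can be applied. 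Since the paper itself treats the statement as a citation, your roadmap matches it in kind and in attribution; just be aware that what you wrote is a correct reduction plus references, not a proof.
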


The property of relative separation is equivalent to the condition that the upper Beurling density $$\mathcal D^{+}(\Lambda):=
\limsup_{h\rightarrow\infty}\sup_{x\in R}\frac{\#(\Lambda\cap[x-h,x+h])}{2h}$$ is finite.

For the critical case when $\mathcal D^-(\Lambda)= 1$, the complete characterization  was beautifully formulated by Joaquim Ortega-Cerd\`{a} and Kristian
Seip in \cite{OSANN} where the key step was to connect the problem to de Branges' theory of Hilbert spaces of entire functions, and this
new characterization lead to applications in a classical inequality of H. Landau and an approximation problem for subharmonic functions.

In recent years there has been a wide range of interests in expanding the classical Fourier analysis to fractal or more general probability measures \cite{MR2509326,MR2435649,MR1744572,MR1655831,MR2338387,MR2200934,MR2297038,MR1785282,MR2279556,MR2443273}. One of the central themes of this area of research involves constructive and computational bases in $L^2(\mu)$, where $\mu$ is a measure which is determined by some self-similarity property. These include classical Fourier bases, as well as wavelet and frame constructions.

For $L^2[0,1]$, a sequence of exponentials is Bessel if the frequency set $\Lambda$ has finite upper Beurling density.  For a singular measure $\nu$, a necessary (but not sufficient) condition for such a sequence to be Bessel in $L^2(\nu)$ is that the upper Beurling density of $\Lambda$ is 0 (see \cite{DHSW10}). Since the measures we consider here are singular, we shall use \emph{Beurling dimension} as a replacement for Beurling density.

\begin{definition}\label{deff3}
\cite{CKS08} Let $\Lambda$ be a discrete subset of $\br^d$. For $r>0$, the {\it upper Beurling density corresponding to $r$} (or {\it $r$-Beurling density}) is defined by
$$\mathcal D_r^+(\Lambda):=\limsup_{h\rightarrow\infty}\sup_{x\in\br^d}\frac{\#(\Lambda\cap(x+h[-1,1]^d))}{h^r}.$$
The {\it upper Beurling dimension} (or simply the {\it Beurling dimension}) is defined by
$$\dim^+(\Lambda):=\sup\{r>0 : \D^+_r(\Lambda)>0\} = \inf\{r>0 : \D_r^+(\Lambda)<\infty\}.$$

Given a set of exponential functions $E(\Lambda):=\{e_\lambda : \lambda\in\Lambda\}$ we also say that $\mathcal D_r^+(\Lambda)$ is the $r$-Beurling density of $E(\Lambda)$.
\end{definition}

\begin{definition}\label{defaifs}
Let $R$ be a $d\times d$ expansive integer matrix, $B\subset\bz^d$, with $\#B=N\geq2$. Define the iterated function system
$$\tau_b(x)=R^{-1}(x+b),\quad(x\in\br^d).$$

For convenience, we let $S := R^{T}$.

Let $(p_b)_{b\in B}$ be a finite set of probabilities, i.e., $0<p_b<1$, $\sum_{b\in B}p_b=1$.
Define the following operator $\mathcal T$ on Borel probability measures on $\br^d$
\begin{equation}\label{eqtmu1}
(\T\gamma)(E)=\sum_{b\in B}p_b\gamma(\tau_b^{-1}(E)),
\end{equation}
for all Borel sets $E$.
Equivalently the measure $\T\gamma$ is defined by
\begin{equation}
\int f\,d\T\gamma=\sum_{b\in B}p_b\int f\circ\tau_b\,d\gamma,
\label{eqtmu2}
\end{equation}
for all continuous functions $f$ on $\br^d$.

We denote by $\mu := \mu_{B,p}$ the unique invariant measure for the operator $\T$, i.e. $\T\mu_{B,p}=\mu_{B,p}$, whose existence is guaranteed by \cite{Hut81}.
\end{definition}

\begin{definition}
For a Borel probability measure $\gamma$, if $\Lambda = \{\lambda_{n}\}_{n=0}^{\infty} \subset \mathbb{R}$ is such that $\{ e_{\lambda_{n}} \} \subset L^2(\gamma)$ is a Bessel sequence, we say $\Lambda$ is a Bessel spectrum for $\gamma$.  Likewise, $\Lambda$ is a Riesz basic spectrum if $\{ e_{\lambda_{n}} \}$ is a Riesz basic sequence in $L^2(\gamma)$.
\end{definition}

In the classical Lebesgue measure case, it is relatively easy (with the help of Theorem \ref{th1.1}) to construct frames/Riesz bases or more
generally Bessel sequences/Riesz sequences $\{e^{2\pi i\lambda\cdot x}\}_{\Lambda\in \Lambda}$ with $\Lambda$ having positive Burling density.
However, this is not the case anymore for fractal measures.  Indeed, for the fractal measure $\mu_{B,p}$ in the case that $R = 3$, $B =
\{0,2\}$, and $p_{0} = p_{2} = 1/2$, the corresponding measure $\mu_{3}$ (which is the Hausdorff measure on the middle third Cantor) has the property that $\{
3^{n} : n=0,1,\dots \}$ is NOT a Bessel spectrum (and hence can not be a Riesz basic spectrum) \cite[Proposition 3.10]{DHSW10}. Note that this
set $\{ 3^{n} : n=0,1,\dots \}$  is very ``sparse" and in fact it has the Beurling dimension equal to $0$. One of the open problems for the
fractal measure $\mu_{3}$ is that whether frames or Riesz bases spectrum exist. In \cite[Theorem 3.5]{DHSW10} it was proved that for a fractal
measure $\mu_{B,p}$, a necessary condition for $\Lambda$ to be a Bessel spectrum is that the Beurling dimension of $\Lambda$ is at most
$\log_{R} B$, and that the Beurling dimension of $\Lambda$ is equal to $\log_{R} B$ (under a mild technical condition) in order for $\Lambda$ to
be a frame spectrum. The above example $(\Lambda = \{ 3^{n} : n=0,1,\dots \}$ ) shows that this finite Beurling dimension condition is not
sufficient for $\Lambda$ to be even a Bessel spectrum. This naturally leads to the existence problem for Bessel spectrum  and Riesz basic
spectrum with positive Beurling dimensions. The main purpose of this paper is to prove that Bessel spectrum  and Riesz basic spectrum with
positive Beurling dimension exists for all the fractal measures $\mu_{B,p}$. We believe that this is an important positive step toward answering
the question of whether frames or Riesz bases spectra exist for the fractal measure $\mu_{3}$.

\section{Spectra of Positive Beurling Dimension}\label{bess}

We start with our main theorem:

\begin{theorem}\label{thb1}
Let $R$ be a $d\times d$ expansive integer matrix, $0\in B\subset \bz^d$ , $(p_b)_{b\in B}$ a list of probabilities and let $\mu=\mu_{B,p}$ be the invariant measure associated to the iterated function system
$$\tau_b(x)=R^{-1}(x+b),\quad(x\in\br^d, b\in B)$$
and the probabilities $(p_b)_{b\in B}$. Then $\mu$ has an infinite Riesz basic spectrum of positive Beurling dimension.
\end{theorem}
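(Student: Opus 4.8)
The plan is to convert the statement into an estimate on the Fourier transform of $\mu$ and on a Gram matrix. Applying \eqref{eqtmu2} to the exponentials $e_\xi$ and using $\T\mu=\mu$ gives the self-similarity $\fo{\mu}(\xi)=m_B(S^{-1}\xi)\fo{\mu}(S^{-1}\xi)$, where $m_B(\xi):=\sum_{b\in B}p_b e^{2\pi i b\cdot\xi}$, and hence $\fo{\mu}(\xi)=\prod_{k\ge1}m_B(S^{-k}\xi)$. Since $\langle e_\lambda,e_{\lambda'}\rangle_{L^2(\mu)}=\fo{\mu}(\lambda-\lambda')$ and $\fo{\mu}(0)=1$, the family $\{e_\lambda\}_{\lambda\in\Lambda}$ is a Riesz basic sequence as soon as the Hermitian Gram matrix $G=[\fo{\mu}(\lambda-\lambda')]$ satisfies $\|G-I\|<1$; by Schur's test it is enough to arrange
\[
\sup_{\lambda\in\Lambda}\ \sum_{\lambda'\in\Lambda,\ \lambda'\ne\lambda}\ \bigl|\fo{\mu}(\lambda-\lambda')\bigr|<1 .
\]
So the whole problem reduces to producing an infinite set $\Lambda$ of positive Beurling dimension whose nonzero differences $\gamma=\lambda-\lambda'$ have $|\fo{\mu}(\gamma)|$ small and summable.

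The engine is a decay lemma. The function $m_B$ is $\bz^d$-periodic with $|m_B|\le1$ and $|m_B(0)|=1$; because $N\ge2$, $|m_B|$ is nonconstant, so $W:=\{\xi:|m_B(\xi)|\le\rho\}$ has nonempty interior for a suitable $\rho<1$. As $S$ is expansive, the lattices $S^{-k}\bz^d$ become dense in $\bt^d$, so there are $a^*\in\bz^d$ and $k_0$ with $t^*:=S^{-k_0}a^*\bmod\bz^d$ in the interior of $W$; fix a ball $B(t^*,2\eta)\subset W$. Now fix a large spacing $n_0$ and restrict attention to differences of the form $\gamma=\sum_i S^{n_0 i}c_i$ with $c_i\in\{0,\pm a^*\}$ and finitely many nonzero. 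For each index $i$ with $c_i\ne0$, the periodicity of $m_B$ together with the contractivity of $S^{-1}$ shows that, once $n_0$ is large, $S^{-(n_0 i+k_0)}\gamma\equiv \pm t^*+(\text{error of size}\le\eta)\pmod{\bz^d}$, so this factor of the infinite product has modulus $\le\rho$; distinct $i$ give distinct such factors, whence
\[
\bigl|\fo{\mu}(\gamma)\bigr|\le \rho^{\,\#\{i\,:\,c_i\ne0\}} .
\]
Encoding an admissible frequency $\lambda$ by its digit string $s(\lambda)\in\{0,a^*\}^{\mathbb N}$, this says $|\fo{\mu}(\lambda-\lambda')|\le\rho^{\,d_H(s(\lambda),s(\lambda'))}$, a power of the Hamming distance.

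It remains to choose the digit strings, and this is where the main obstacle lies: positive Beurling dimension forces $\Lambda$ to contain $\gtrsim h^r$ points in balls of radius $h$, i.e.\ exponentially many strings per scale, whereas summability of the row sums forbids strings that are close in Hamming distance. One cannot simply take all strings of a given length $M$, because the $\sim M$ single-digit neighbours alone blow the row sum up; this is the concrete manifestation of the genuine obstruction $\fo{\mu}(S^N v)=\fo{\mu}(v)$ for $v\in\bz^d$, which prevents any decay along sparse differences. I resolve this by a coding argument: for any rate $R<1-\log_2(1+\rho)$ a random collection of $2^{RM}$ length-$M$ strings has expected row sum $\bigl(2^{R}\tfrac{1+\rho}{2}\bigr)^{M}\to0$, so a standard averaging argument yields, for each large $M$, a code $C_M$ of $2^{RM}$ strings of weight $\approx M/2$ whose within-code row sums are $<\tfrac14$. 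Setting $\Lambda=\bigcup_j\Lambda_{M_j}$ for a rapidly increasing sequence $M_j$, where $\Lambda_{M_j}$ places $C_{M_j}$ on the digit positions $0,\dots,M_j-1$, the large weights force every cross-scale difference to have $\gtrsim M_j$ nonzero digits, making the cross terms negligible and keeping the global row sum $<1$; this gives the Riesz basic sequence. Finally each cluster $\Lambda_{M_j}$ supplies $2^{RM_j}$ points inside a ball of radius $\asymp\|S\|^{n_0 M_j}$, so with $r=\tfrac{R\log 2}{n_0\log\|S\|}$ one gets $\D^+_r(\Lambda)>0$, hence $\dim^+(\Lambda)\ge r>0$. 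The delicate point throughout is precisely this simultaneous control of decay (Hamming distances large) and density (exponentially many strings at each scale), and the verification that forming the multiscale union preserves both.
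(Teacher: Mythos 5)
Your proposal is correct, and its skeleton is the same as the paper's: reduce to row sums of the Gram matrix $[\widehat\mu(\lambda-\lambda')]$ via Schur's test, prove a decay estimate $|\widehat\mu(\lambda-\lambda')|\le\rho^{\,d_H(\lambda,\lambda')}$ for frequencies given by digit expansions in powers of $S$ whose digit differences land, after rescaling, in a sublevel set of $|m|$, then produce exponentially large codes with linear Hamming separation and assemble them across scales; the final Beurling-dimension count is identical. The differences are in how the two key steps are implemented. For the decay step, the paper (Lemma~\ref{lemb1}) approximates two points avoiding $\{|m|=1\}$ by $S^{-p}$-lattice points and works with a two-element nonzero digit set $A=\{a_0,a_1\}$, using $0\in B$ to identify that exceptional set; you use a single nonzero digit $a^*$ with an auxiliary shift $k_0$ into an open sublevel set $W$, which is equivalent and even bypasses the use of $0\in B$. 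For the combinatorial step, the paper invokes the Gilbert--Varshamov bound (Lemma~\ref{lemb4}) to get minimum-distance codes and then \emph{concatenates} codewords across scales, counting $\lambda'$ by the largest differing block; since that count is $4^{q_r}$ against decay $\rho^{q_r}$, the paper must first force $\rho<1/4$ via the digit-repetition trick of Remark~\ref{rem1}. Your random-coding-plus-expurgation argument controls the row sums $\sum_{\lambda'}\rho^{\,d_H}$ directly in expectation and works for any $\rho<1$, so you never need the repetition trick; the price is that your multiscale set is a \emph{union} of clusters rather than a concatenation, so you need the weight-$\approx M/2$ constraint to make cross-scale Hamming distances large. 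That step does go through, but note it silently requires $2^R\sqrt{\rho}<1$ so that the cross-cluster sums $\lesssim(2^R\rho^{1/2})^{M_{j'}}$ decay; this is in fact implied by your rate condition $R<1-\log_2(1+\rho)$ because $(1+\rho)^2\ge 4\rho$ (and in any case one may shrink $R$, which costs nothing since any positive rate yields positive Beurling dimension). Also, injectivity of the digit encoding, which the paper gets from incongruence of $A\cup\{0\}$ mod $S^p\mathbb{Z}^d$, follows in your setting from the decay bound itself, since equal integers with distinct digit strings would force $1=|\widehat\mu(0)|\le\rho<1$.
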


The proof proceeds via a series of lemmas.  Throughout the remainder of the paper, $R$, $B$, $p$ are fixed, and $\mu := \mu_{B,p}$.

\begin{lemma}\label{lemip}
The Fourier transform of the invariant measure $\mu$ satisfies the scaling equation
\begin{equation}
\widehat\mu(x)= m(S^{-1} x)\widehat\mu(S^{-1} x),\quad(x\in \br^d)
\label{eqsc}
\end{equation}
where
\begin{equation}
m(x):= \sum_{b\in b}p_be^{2\pi i b\cdot x},\quad(x\in\br^d)
\label{eqmb}
\end{equation}

The function $\widehat\mu$ is given by the infinite product formula
\begin{equation}
\widehat\mu(x)=\prod_{k=1}^\infty m\left(S^{-k}x\right),\quad(x\in\br^d)
\label{eqip}
\end{equation}

The infinite product converges uniformly on compact subsets.
\end{lemma}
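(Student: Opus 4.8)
The plan is to derive all three statements from the single fixed-point identity $\T\mu=\mu$, read through the Fourier transform (with the convention $\widehat\mu(x)=\int e^{2\pi i x\cdot t}\,d\mu(t)$). First I would obtain the scaling equation by testing the invariance relation \eqref{eqtmu2} against the exponential $f(t)=e^{2\pi i x\cdot t}$. Since $\tau_b(t)=R^{-1}(t+b)$ and $x\cdot R^{-1}y=(R^{-1})^{T}x\cdot y=S^{-1}x\cdot y$, the composition $f\circ\tau_b$ becomes $e^{2\pi i S^{-1}x\cdot b}\,e^{2\pi i S^{-1}x\cdot t}$. Summing over $b$ factors out $m(S^{-1}x)=\sum_{b\in B}p_b e^{2\pi i b\cdot S^{-1}x}$ and leaves $\int e^{2\pi i S^{-1}x\cdot t}\,d\mu(t)=\widehat\mu(S^{-1}x)$, giving exactly \eqref{eqsc}. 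This step is just a change of variables.

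Next I would establish the uniform convergence of the product, which is the technical heart of the lemma. Two ingredients combine. Since $m(0)=\sum_{b\in B}p_b=1$ and $m$ is a trigonometric polynomial, the elementary bound $|e^{i\theta}-1|\le|\theta|$ yields $|m(y)-1|\le C\|y\|$ for all $y$, with $C=2\pi\sum_{b\in B}p_b\|b\|$ depending only on $B$ and $p$. Because $R$, hence $S=R^{T}$, is expansive, the spectral radius of $S^{-1}$ is strictly less than $1$; choosing an adapted norm (or using $\|S^{-n}\|^{1/n}\to r(S^{-1})<1$) gives a geometric estimate $\|S^{-k}x\|\le C_1\rho^{-k}\|x\|$ for some $\rho>1$. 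Combining these, for $x$ in a fixed compact set $K$ one gets $\sum_{k\ge1}\sup_{x\in K}|m(S^{-k}x)-1|\le C C_1\bigl(\sup_{x\in K}\|x\|\bigr)\sum_{k\ge1}\rho^{-k}<\infty$. The standard criterion for infinite products then shows $\prod_{k=1}^\infty m(S^{-k}x)$ converges uniformly on $K$, establishing the third assertion and in particular that the product defines a continuous function.

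Finally, for the product formula I would iterate the scaling equation $n$ times to obtain the exact identity $\widehat\mu(x)=\big(\prod_{k=1}^n m(S^{-k}x)\big)\widehat\mu(S^{-n}x)$, valid for every $n$. Since $\mu$ is a probability measure, $\widehat\mu$ is continuous with $\widehat\mu(0)=1$, and since $S^{-n}x\to 0$ we have $\widehat\mu(S^{-n}x)\to 1$. Letting $n\to\infty$ and using the convergence of the partial products from the previous step yields $\widehat\mu(x)=\prod_{k=1}^\infty m(S^{-k}x)$, which is \eqref{eqip}.

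I expect the main obstacle to be the convergence step, specifically extracting the geometric decay $\|S^{-k}x\|\le C_1\rho^{-k}\|x\|$ cleanly from expansiveness: for a non-diagonalizable $S$, or one with complex eigenvalues, the Euclidean operator norm of $S^{-1}$ need not be less than $1$, so one must invoke the spectral-radius/adapted-norm argument rather than a naive contraction estimate. Everything else is essentially bookkeeping.
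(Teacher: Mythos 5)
Your proposal is correct and follows exactly the route the paper indicates: the paper's proof consists of the single instruction ``apply the Fourier transform to the invariance equation \eqref{eqtmu2}'' together with citations, and your argument is precisely that computation carried out in full. The details you supply --- the change of variables giving $x\cdot R^{-1}(t+b)=S^{-1}x\cdot(t+b)$, the bound $|m(y)-1|\le C\|y\|$ combined with the spectral-radius estimate $\|S^{-k}x\|\le C_1\rho^{-k}\|x\|$ for uniform convergence, and the iteration of \eqref{eqsc} with $\widehat\mu(S^{-n}x)\to\widehat\mu(0)=1$ --- are sound and are exactly what the cited references contain.
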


\begin{proof}
Apply the Fourier transform to the invariance equation \eqref{eqtmu2}. See e.g. \cite{JP98a,DJ06b} for details.
\end{proof}

\begin{lemma}\label{lemb1}
 There exists $p \in \mathbb{N}$, $0<\rho<1$, and a finite set $A\subset\bz^d\setminus\{0\}$ with $\#A \geq 2$ such that if
$M:=\max\{\|S^{-p}a-S^{-p}a'\| : a,a'\in A\cup\{0\}, a\neq a'\}$, then
\begin{equation} \label{eqmrho}
\left|m\left(S^{-p} (a-a')+x\right)\right|\leq \rho
\end{equation}
for all $x$ with $\|x\|\leq \frac{M\|S^{-1}\|^{p}}{1-\|S^{-1}\|^p}$,  and for all $a,a'\in A\cup \{0\}$ with $a\neq a'$.

In addition, the elements of $A$ are incongruent $\mod S^p\bz^d$.
\end{lemma}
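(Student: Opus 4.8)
The plan is to recognize that $m$ is periodic not merely modulo $\bz^d$ but modulo the larger group
$$Z:=\{x\in\br^d : b\cdot x\in\bz \text{ for all } b\in B\},$$
and to carry out the whole construction in the compact quotient $G:=\br^d/Z$. First I would record the elementary facts: since every $b\in B$ lies in $\bz^d$ and $0\in B$, one has $|m(x)|\le \sum_b p_b=1$ with equality exactly when $e^{2\pi i b\cdot x}=1$ for all $b\in B$, i.e.\ exactly when $x\in Z$; moreover $m(x+z)=m(x)$ for every $z\in Z$. Thus $m$ descends to a continuous function $\bar m$ on $G$ with $|\bar m(g)|=1$ iff $g=0$. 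Writing $\Gamma:=\langle B\rangle_{\bz}$, $V:=\operatorname{span}_{\br}\Gamma$ and $r:=\operatorname{rank}\Gamma\ge 1$, a short computation gives $Z=\Gamma^{*}\oplus V^{\perp}$, where $\Gamma^{*}\subset V$ is the dual lattice of $\Gamma$, so $G\cong V/\Gamma^{*}\cong \bt^{r}$ is a compact torus of positive dimension. Because $S$ is expansive I fix, as is implicit in the statement, a norm with $c:=\|S^{-1}\|<1$, and I let $\pi:\br^d\to G$ be the quotient map, which is $1$-Lipschitz for the quotient metric $d_G$.

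The key geometric input is that, since $G$ is an infinite compact group, I can choose a finite configuration that is uniformly separated from the identity under differences. Concretely I would fix $g_1,\dots,g_n\in G$ (with $g_0:=0$) such that $d_G(g_i-g_j,0)\ge 3\kappa$ for all $i\ne j$ and some $\kappa>0$; embedding a circle into $\bt^{r}$ and taking $g_i=i/(n+1)$ achieves this with $\kappa=\tfrac{1}{3(n+1)}$, for any $n\ge 2$. Fix bounded lifts $\tilde g_i\in\br^d$ of the $g_i$. Now I invoke the density in $\br^d$ of $\bigcup_{p\ge1}S^{-p}\bz^d$: these lattices are nested increasing (as $S\bz^d\subseteq\bz^d$) with covering radius $\le c^{p}\sqrt d/2\to 0$, so for $p$ large there exist $a_1,\dots,a_n\in\bz^d$ with $\|S^{-p}a_i-\tilde g_i\|$ as small as desired, simultaneously in $i$.

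Setting $A:=\{a_1,\dots,a_n\}$ (and using $a_0=0$), the approximation gives two things at once. First, $\|S^{-p}a_i\|\le\|\tilde g_i\|+\varepsilon$, so $M=\max_{i\ne j}\|S^{-p}(a_i-a_j)\|$ is bounded by a constant independent of $p$; hence the admissible radius $\frac{M c^{p}}{1-c^{p}}$ tends to $0$ as $p\to\infty$. Second, since $\pi$ is $1$-Lipschitz, $d_G(\pi(S^{-p}(a_i-a_j)),0)\ge d_G(g_i-g_j,0)-2\varepsilon\ge \kappa$ once $\varepsilon\le\kappa$. With $\rho:=\max\{|\bar m(g)|:d_G(g,0)\ge \kappa/2\}<1$ (finite and $<1$ by compactness of $G$ and continuity of $\bar m$), I then take $p$ so large that the radius is $\le \kappa/2$. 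For any $x$ with $\|x\|\le\frac{M c^p}{1-c^p}\le\kappa/2$ this yields $d_G(\pi(S^{-p}(a_i-a_j)+x),0)\ge \kappa-\kappa/2=\kappa/2$, whence $|m(S^{-p}(a_i-a_j)+x)|=|\bar m(\pi(S^{-p}(a_i-a_j)+x))|\le\rho$, which is exactly \eqref{eqmrho}. The incongruence assertion is then automatic: $|m(S^{-p}(a_i-a_j))|\le\rho<1$ forces $S^{-p}(a_i-a_j)\notin Z\supseteq\bz^d$, i.e.\ $a_i\not\equiv a_j \pmod{S^p\bz^d}$ (and $a_i\not\equiv 0$, so $a_i\ne 0$).

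The main obstacle—and the thing the quotient viewpoint is designed to defeat—is a competition of scales: one wants the difference points $S^{-p}(a_i-a_j)$ to sit at a definite distance from the zero set $\{|m|=1\}$, yet $0$ belongs to that zero set and the points must shrink if $A$ is held fixed. Passing to $G=\br^d/Z$ and fixing the configuration there decouples ``distance from $Z$'' (kept $\ge\kappa$ uniformly in $p$) from ``Euclidean size'' (kept bounded via bounded lifts), so that the error radius $\frac{M c^p}{1-c^p}$ genuinely shrinks while $|m|$ stays strictly below $1$. Recognizing the enlarged periodicity of $m$ (modulo $Z$, not just $\bz^d$) and the resulting compactness of $G$ is the crux; the remaining ingredients—density of the refining lattices, uniform continuity, and compactness—are soft.
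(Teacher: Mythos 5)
Your proof is correct and follows essentially the same strategy as the paper's: characterize $\{|m|=1\}$ as the closed subgroup $Z$ (using $0\in B$ and the triangle inequality), fix finitely many points whose pairwise differences avoid $Z$, approximate them by points of $S^{-p}\bz^d$ for large $p$ (covering radius decaying like $\|S^{-1}\|^p$), obtain the uniform bound $\rho<1$ on small balls around the difference points by a continuity/compactness argument, and deduce the incongruence mod $S^p\bz^d$ from $|m|<1$ there. The only real difference is packaging: the paper selects its base points $x_0,x_1$ by a measure-zero argument (the set $\S_1=\{|m|=1\}$ is Lebesgue-null) and then uses uniform continuity of the $\bz^d$-periodic function $m$, whereas you pass to the compact quotient $G=\br^d/Z\cong\bt^r$ (via $Z=\Gamma^*\oplus V^\perp$), place an explicitly separated configuration there, and read off $\rho$ as a maximum of $|\bar m|$ over a compact set away from $0$ --- a tidier, more structural way of running the same perturbation argument.
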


\begin{proof}
We have to stay away from the points where $|m|$ is 1. Note that $|m(x)|=1$ implies
$$\left|\sum_{b\in B}p_be^{2\pi ib\cdot x}\right|=1.$$
Since $0\in B$, the term $p_0\cdot 1$ appears in the sum. Using the triangle inequality we must have $e^{2\pi ib\cdot x}=1$ for all $b\in B$ and therefore $b\cdot x\in\bz$. It follows that $$\S_1:=\{x : |m(x)|=1\}=\{x : b\cdot x\in\bz\}.$$

Since $\S_1$ has Lebesgue measure zero in $\br^d$, we can find two distinct points $x_0,x_1$ such that $\|x_0\|,\|x_1\|\leq 1$ with $\pm x_0,\pm x_1,\pm (x_0-x_1)\not\in\S_1$.

Choose $\delta'<1$ and let $c:=\|S^{-1}\|<1$ (since the matrix $S$ is expansive). We can pick $p \in \mathbb{N}$ large enough such that
\begin{equation} \label{eqpf1}
\frac{4c^p}{1-c^p}<\delta'.
\end{equation}
Moreover, since the volume of the lattice $S^{-p} \bz^{d}$ goes to $0$ as $p$ gets large, we may choose $p$ so that additionally there exist integers $a_0\neq a_1\in\bz^d$ with $\|x_0-S^{-p} a_0\|, \|x_1-S^{-p}a_1\|\leq \delta'$.

Let $A:=\{a_0,a_1\}$. Then $\|S^{-p}a_i\|\leq 1+\delta'<2$ so $M$ as defined in the hypothesis will be  less than $4$.  If $\|y\|<M c^p/(1-c^p)$ then for $a,a'\in A\cup\{0\}$, $a\neq a'$, there exists $x,x'\in \{x_0,x_1,0\}$ such that
$$\|(S^{-p}(a-a')+y)-(x-x')\|\leq \|S^{-p}a-x\|+\|S^{-p}a'-x'\|+\|y\|\leq 3\delta'.$$
Thus, if $\delta'$ is small enough, $S^{-p}(a-a')+y$, being close to $x-x'$, stays away from the set $\S_1$ so by uniform continuity of $m$, there is a $\rho<1$ such that
$$|m(S^{-p}(a-a')+y)|\leq \rho,$$
for all $y$ with $\|y\|\leq Mc^p/(1-c^p)$. This proves the existence of $p,\rho$ and $A$.

The elements in $A\cup\{0\}$ cannot be congruent $\mod S^p\bz^d$ because $|m(S^{-p}(a-a'))|<1$, while $m(k)=1$ for $k\in\bz^d$ and $m$ is $\bz^d$ periodic.

\end{proof}

\begin{definition}\label{defb1}

Let $p,\rho$ and $A$ be as in Lemma \ref{lemb1}.  Let
$$\Lambda(A,p) := \{ a_0+S^pa_1+\dots+S^{pr}a_r : a_{i} \in A \cup \{0\} \}.$$
We identify the integer $a_0+S^pa_1+\dots+S^{pr}a_r$ with the word $a_0a_1\dots a_r$ and with the infinite word $a_0a_1\dots a_r a_{r+1}\dots$, where $a_i=0$ for $i\geq  r+1$.
Since the elements in $A\cup\{0\}$ are incongruent $\mod S^p\bz^d$, different digits means different integers.

For two such $\lambda=a_0a_1\dots$, $\lambda'=a_0'a_1'\dots$, we define the Hamming distance between them by
$$d_p(\lambda,\lambda')=\#\{ i : a_i\neq a_i'\}.$$

\end{definition}

\begin{lemma}\label{lemb2}
Let $p,\rho,A,M$ be as in Lemma \ref{lemb1}. Let $\lambda=a_0a_1\dots$, $\lambda'=a_0'a_1'\dots$ be distinct words with digits in $A$.
Then
$$|\widehat\mu(\lambda-\lambda')|\leq \rho^{d_p(\lambda,\lambda')}.$$
\end{lemma}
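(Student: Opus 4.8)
The plan is to exploit the infinite product formula \eqref{eqip} together with the fact that $|m|\leq 1$ everywhere (which follows immediately from \eqref{eqmb}, since $m$ is a convex combination of unimodular terms). Writing $\lambda-\lambda'=\sum_{i=0}^r S^{pi}(a_i-a_i')$, I would read off $\widehat\mu(\lambda-\lambda')=\prod_{k=1}^\infty m(S^{-k}(\lambda-\lambda'))$ and try to locate, for each index $i$ at which the digits differ ($a_i\neq a_i'$), a single factor in this product whose modulus is at most $\rho$. Since every factor has modulus at most $1$, discarding all the remaining factors only increases the product, so producing $d_p(\lambda,\lambda')$ such factors would yield $|\widehat\mu(\lambda-\lambda')|\leq\rho^{d_p(\lambda,\lambda')}$ at once.

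The heart of the argument is to choose, for a given differing index $i$, the factor at $k=p(i+1)$ and to decompose it so that Lemma \ref{lemb1} applies. I would compute
\[
S^{-p(i+1)}(\lambda-\lambda')=S^{-p}(a_i-a_i')+\sum_{j=0}^{i-1}S^{p(j-i-1)}(a_j-a_j')+\sum_{j=i+1}^{r}S^{p(j-i-1)}(a_j-a_j').
\]
The last sum has nonnegative exponents $p(j-i-1)\geq 0$, so since $S$ is an integer matrix it is a vector in $\bz^d$; as $m$ is $\bz^d$-periodic it may be dropped. Setting $x_i:=\sum_{j=0}^{i-1}S^{p(j-i-1)}(a_j-a_j')$, this gives $m(S^{-p(i+1)}(\lambda-\lambda'))=m(S^{-p}(a_i-a_i')+x_i)$, which is exactly the shape of the expression bounded in \eqref{eqmrho}, provided I can control $\|x_i\|$.

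The remaining step is the norm estimate, and it is here that the definition of $M$ is tailored to fit. Writing $c:=\|S^{-1}\|<1$ and factoring $S^{p(j-i-1)}=S^{p(j-i)}S^{-p}$, I would use $\|S^{-p}(a_j-a_j')\|\leq M$ (the definition of $M$, valid also when $a_j=a_j'$, since then the norm is $0\leq M$) together with $\|S^{p(j-i)}\|\leq c^{p(i-j)}$ for $j<i$ to obtain, via the geometric series,
\[
\|x_i\|\leq\sum_{j=0}^{i-1}c^{p(i-j)}M=M\sum_{l=1}^{i}c^{pl}\leq \frac{Mc^p}{1-c^p}.
\]
This is precisely the radius appearing in \eqref{eqmrho}, so each such factor satisfies $|m(S^{-p(i+1)}(\lambda-\lambda'))|\leq\rho$. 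Since the indices $p(i+1)$ are distinct as $i$ ranges over the differing positions, collecting these $d_p(\lambda,\lambda')$ factors and bounding all remaining factors by $1$ finishes the proof. I expect the only delicate point to be the bookkeeping in the three-way split above: correctly identifying which tail terms become integer vectors (so that periodicity of $m$ removes them) and checking that the low-index tail is summed against the very geometric factor that produces the radius in \eqref{eqmrho}.
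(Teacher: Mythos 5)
Your proof is correct and follows essentially the same route as the paper's: for each differing digit, select one factor of the infinite product \eqref{eqip}, reduce its argument modulo $\bz^d$ using the periodicity of $m$, bound the low-index tail by the geometric series $M\|S^{-1}\|^p/(1-\|S^{-1}\|^p)$, invoke Lemma \ref{lemb1}, and bound all remaining factors by $1$. The only difference is bookkeeping: the paper first groups the product into blocks $m^{(p)}$ of $p$ factors and then extracts one good $m$-factor per block, and its indexing (evaluating at $S^{-np}$ when $a_n\neq a_n'$) is off by one block, whereas your choice $k=p(i+1)$ when $a_i\neq a_i'$ is exactly the corrected indexing.
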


\begin{proof}
We use the infinite product formula for $\widehat\mu$, and we group every $p$ terms. We have
$$\widehat\mu(x)=\prod_{n=1}^\infty m^{(p)}(S^{-np}x),\quad(x\in\br^d)$$
where
$$m^{(p)}(x)=m(x)m(Sx)\dots m(S^{p-1}x).$$

Since $|m|\leq 1$, we get for any $I \subset \mathbb{N}$,
\begin{equation}
|\widehat\mu(x)| \leq \prod_{n \in I} |m^{(p)}(S^{-np}x)|.
\label{eqb3}
\end{equation}

Suppose $a_{n} \neq a_{n}'$.  For $k \geq n$, $S^{-np}(S^{kp}(a_{k} - a_{k}')) \in \mathbb{Z}^d$.  Therefore, we have
$$S^{-np}(\lambda-\lambda')\equiv S^{-p}(a_{n-1}-a_{n-1}')+S^{-2p}(a_{n-2}-a_{n-2}')+\dots+S^{-np}(a_0-a_0')\mod\bz^d,$$
and since
$$\|S^{-p}(a_{n-1}-a_{n-1}')+\dots+S^{-np}(a_0-a_0')\|\leq \|S^{-p}\|(M+\|S^{-p}\|\cdot M+\cdots + \|S^{(-n-1)p}\| \cdot M)\leq \frac{M\|S^{-1}\|^p}{1-\|S^{-1}\|^p}$$
with Lemma \ref{lemb1} we obtain
$$|m(S^{-np}(\lambda-\lambda'))|\leq \rho.$$

Thus, using \eqref{eqb3}, we obtain
$$|\widehat\mu(\lambda-\lambda')|\leq \prod_{n \in I} |m^{(p)}(S^{-np}(\lambda - \lambda')| \leq \rho^{d_p(\lambda,\lambda')},$$
where $I := \{ n : a_{n} \neq a_{n}' \}$ with $\# I = d_p(\lambda,\lambda')$.
\end{proof}

\begin{remark}\label{rem1}
By changing the set of digits $A$ and $p$ we can assume that $\rho$ is as small as we want. This is because we can replace each digit in $A$ by a repetition of it, say $l$ times. So, for example $12$ is replaced by $111222$, where $l=3$. By doing this the distance between any two words is multiplied by $l$. So if we replace $p$ by $p\cdot l$ and $A$ by $A^{(l)}:=\{a^{(l)}:=\underbrace{aa\dots a}_{l\mbox{ times}} : a\in A\}$, the number $\rho$ in Lemma \ref{lemb2} is replaced by $\rho^l$, which can be made as small as needed. More precisely, we have
$$d_{p,A}(\lambda^{(l)},\gamma^{(l)})=l\cdot d_{pl,A^{(l)}}(\lambda,\gamma),\mbox{ for }\lambda,\gamma\in A^{(l)}.$$
The distance $d_{pl,A^{(l)}}$ counts each digit $a^{(l)}$ as just one.

\end{remark}

\begin{lemma}\label{lemb3}
Let $p,\rho,A$ be as in Lemma \ref{lemb1}, and let $\Lambda \subset \Lambda(A,p)$.  Suppose
$$C:=\sup_{\lambda\in\Lambda}\sum_{\lambda'\in\Lambda\setminus\{\lambda\}}\rho^{d_p(\lambda,\lambda')}<\infty.$$
Then $\Lambda$ is a Bessel spectrum with Bessel bound $1+C$.

Moreover, if $C<1$, then $\Lambda$ is a Riesz basic spectrum.
\end{lemma}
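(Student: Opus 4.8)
The plan is to read off the squared norm of a finite exponential sum as a quadratic form against the Gram matrix of the $e_\lambda$, $\lambda\in\Lambda$, and then to control the off-diagonal entries by the exponential decay estimate of Lemma \ref{lemb2}. Since $\mu$ is a probability measure, $\langle e_\lambda,e_{\lambda'}\rangle_{L^2(\mu)}=\int e^{2\pi i(\lambda-\lambda')\cdot x}\,d\mu(x)=\widehat\mu(\lambda-\lambda')$, so in particular the diagonal entries are $\widehat\mu(0)=1$. Thus for any finite subset $F\subset\Lambda$ and scalars $\{c_\lambda\}_{\lambda\in F}$ I would first record the exact identity
\begin{equation*}
\left\|\sum_{\lambda\in F}c_\lambda e_\lambda\right\|^2=\sum_{\lambda\in F}|c_\lambda|^2+\sum_{\substack{\lambda,\lambda'\in F\\ \lambda\neq\lambda'}}c_\lambda\overline{c_{\lambda'}}\,\widehat\mu(\lambda-\lambda'),
\end{equation*}
which isolates the diagonal contribution $\sum_{\lambda\in F}|c_\lambda|^2$ and reduces everything to estimating the off-diagonal remainder.

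The second step is a diagonal-dominance (Schur-test) estimate. By Lemma \ref{lemb2}, $|\widehat\mu(\lambda-\lambda')|\leq\rho^{d_p(\lambda,\lambda')}$, and using $|c_\lambda\overline{c_{\lambda'}}|\leq\tfrac12(|c_\lambda|^2+|c_{\lambda'}|^2)$ together with the symmetry $d_p(\lambda,\lambda')=d_p(\lambda',\lambda)$ I would bound
\begin{equation*}
\Bigl|\sum_{\lambda\neq\lambda'}c_\lambda\overline{c_{\lambda'}}\,\widehat\mu(\lambda-\lambda')\Bigr|\leq\frac12\sum_{\lambda\neq\lambda'}(|c_\lambda|^2+|c_{\lambda'}|^2)\rho^{d_p(\lambda,\lambda')}=\sum_{\lambda\in F}|c_\lambda|^2\sum_{\substack{\lambda'\in F\\ \lambda'\neq\lambda}}\rho^{d_p(\lambda,\lambda')}\leq C\sum_{\lambda\in F}|c_\lambda|^2,
\end{equation*}
where the final inequality invokes the hypothesis $C=\sup_{\lambda}\sum_{\lambda'\in\Lambda\setminus\{\lambda\}}\rho^{d_p(\lambda,\lambda')}<\infty$. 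Combining with the identity above gives $\bigl\|\sum_{\lambda\in F}c_\lambda e_\lambda\bigr\|^2\leq(1+C)\sum_{\lambda\in F}|c_\lambda|^2$, and by the equivalent formulation of the Bessel property in the Definition this is exactly the assertion that $\Lambda$ is a Bessel spectrum with bound $1+C$.

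For the Riesz claim I would reuse the same off-diagonal estimate, now as a lower bound: when $C<1$ the identity yields $\bigl\|\sum_{\lambda\in F}c_\lambda e_\lambda\bigr\|^2\geq\sum_{\lambda\in F}|c_\lambda|^2-C\sum_{\lambda\in F}|c_\lambda|^2=(1-C)\sum_{\lambda\in F}|c_\lambda|^2$, and since $1-C>0$ this furnishes the lower basis bound $\sqrt{1-C}$ (with upper bound $\sqrt{1+C}$ from the Bessel part), completing the Riesz basic sequence property.

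I do not expect a genuine obstacle in this lemma: all the analytic content is already packaged in Lemma \ref{lemb2}, and the computation above is the textbook Gram-matrix argument. The only points that warrant a line of care are the passage from the finite index set $F$ to all of $\Lambda$ when invoking the supremum defining $C$ (legitimate because every summand $\rho^{d_p(\lambda,\lambda')}$ is nonnegative, so $\sum_{\lambda'\in F\setminus\{\lambda\}}\leq\sum_{\lambda'\in\Lambda\setminus\{\lambda\}}\leq C$) and the use of the symmetry of $d_p$ to symmetrize the double sum. The real difficulty of the paper lies not here but in producing a $\Lambda\subset\Lambda(A,p)$ of positive Beurling dimension for which $C<1$, which is the business of the subsequent lemmas.
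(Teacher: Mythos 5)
Your proof is correct and follows essentially the same route as the paper: both arguments control the Gram matrix of $\{e_\lambda\}$ by its off-diagonal row sums, which Lemma \ref{lemb2} bounds by $C$, giving the Bessel bound $1+C$ via the Schur test and the Riesz lower bound via diagonal dominance when $C<1$. The only difference is presentational -- you write out the Schur-test symmetrization and the lower bound $(1-C)\sum|c_\lambda|^2$ explicitly on the quadratic form, whereas the paper cites Schur's lemma and Christensen's Proposition 3.5.4 for the same conclusions.
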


\begin{proof}
Using Lemma \ref{lemb2}, we have for all $\lambda\in\Lambda$:
$$\sum_{\lambda'\in\Lambda}|\widehat\mu(\lambda-\lambda')|\leq 1+\sum_{\lambda\neq\lambda'}\rho^{d_p(\lambda,\lambda')}\leq 1 + C.$$
An application of Schur's lemma shows that the Grammian of the set $\{e_\lambda : \lambda\in\Lambda\}$ is bounded, and, if $C<1$, it is also diagonally dominant, hence invertible (see \cite[Proposition 3.5.4]{Chr03}). This implies that $\Lambda$ is a Bessel spectrum with bound $1+C$ and is a Riesz basic spectrum when $C<1$.
\end{proof}

\begin{lemma}\label{lemb4}
Let $A$ be an alphabet with $2$ letters. Then there exists $k_{0}\geq 1$ such that for every  $k\geq k_{0}$ and every $n$  there is
 set $\Lambda_n$ containing $2^{n}$ words of length $kn$, i.e., $\Lambda_n\subset A^{kn}$,
 such that the Hamming distance between any two distinct words in $\Lambda_n$ is at least $n$.

\end{lemma}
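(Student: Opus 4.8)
The plan is to recognize this as a standard existence statement in coding theory and to produce $\Lambda_n$ via the Gilbert--Varshamov greedy argument. Identify the $2$-letter alphabet $A$ with $\{0,1\}$, so a word of length $kn$ is an element of the Hamming cube $\{0,1\}^{kn}$ and $d_p$ is the usual Hamming metric. What we must build is a binary code of block length $N=kn$ with $M=2^{n}$ codewords and minimum distance at least $n$; the relative rate and the relative distance are both $1/k$, and the whole point is that as $k$ grows these two requirements become compatible.

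First I would set up the greedy (sphere-covering) construction. Choose codewords one at a time, each time discarding from $\{0,1\}^{kn}$ every word lying within Hamming distance $n-1$ of an already chosen codeword; when the process halts, the closed balls of radius $n-1$ about the chosen words cover the whole cube. Writing $V(N,r)=\sum_{i=0}^{r}\binom{N}{i}$ for the volume of a Hamming ball of radius $r$, this covering yields a code $\mathcal C$ of minimum distance at least $n$ with $|\mathcal C|\,V(kn,n-1)\ge 2^{kn}$, hence $|\mathcal C|\ge 2^{kn}/V(kn,n-1)$. Thus it suffices to find $k_0$ so that for all $k\ge k_0$ and all $n$ one has $2^{kn}/V(kn,n-1)\ge 2^{n}$, equivalently
\[
V(kn,n-1)=\sum_{i=0}^{n-1}\binom{kn}{i}\le 2^{(k-1)n};
\]
taking any $2^{n}$ of the resulting codewords then gives the desired $\Lambda_n$ (the case $n=1$ being trivial, since distinct words always differ).

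The heart of the matter is this counting inequality, and it is exactly here that taking $k$ large does the work. For $k\ge 2$ we have $n-1<kn/2$, so the coefficients $\binom{kn}{i}$ are increasing on $0\le i\le n-1$ and therefore
\[
\sum_{i=0}^{n-1}\binom{kn}{i}\le n\binom{kn}{n}\le n\left(\frac{ekn}{n}\right)^{n}=n\,(ek)^{n}\le (2ek)^{n},
\]
using $\binom{kn}{n}\le(ekn/n)^{n}$ and $n\le 2^{n}$. Hence $V(kn,n-1)\le 2^{(k-1)n}$ holds as soon as $2ek\le 2^{k-1}$, which is valid for all $k\ge 7$; so $k_0=7$ works. (A sharper estimate through the binary entropy function $H$, reducing the inequality to $k\,H(1/k)\le k-1$, already succeeds at $k=5$, but the precise threshold is immaterial.)

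I expect the only genuine obstacle to be organizing this volume estimate cleanly; the existence mechanism itself is routine once the statement is rephrased in coding-theoretic language. The essential phenomenon is simply that a Hamming ball of radius $n-1\approx N/k$ occupies an exponentially negligible fraction $2^{-(1-o(1))N}$ of the cube when $k$ is large, leaving ample room to pack $2^{n}$ pairwise $n$-separated codewords.
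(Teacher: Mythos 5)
Your proof is correct, and it rests on the same pillar as the paper's: the Gilbert--Varshamov bound. The differences are in execution. The paper quotes the GV bound $A_2(kn,n)\ge 2^{kn}/\sum_{j=0}^{n-1}\binom{kn}{j}$ as a known result from a coding-theory reference, whereas you prove it from scratch via the greedy sphere-covering construction, which makes the lemma self-contained. The real divergence is in the Hamming-ball volume estimate: the paper proves $\sum_{j=0}^{n}\binom{kn}{j}<2^{H(1/k)kn}$ by expanding $(u+(1-u))^{kn}$ with $u=1/k$, where $H$ is the binary entropy function, and then obtains a non-explicit $k_0$ from the limit $1-H(1/k)\to 1$; you instead use the elementary chain $\sum_{i=0}^{n-1}\binom{kn}{i}\le n\binom{kn}{n}\le n(ek)^n\le(2ek)^n$, reducing the whole matter to $2ek\le 2^{k-1}$ and the explicit threshold $k_0=7$. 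Your route avoids the entropy function entirely and yields explicit constants; the paper's entropy route gives the asymptotically sharp exponent $1-H(1/k)$, which would matter if one wanted the smallest admissible $k$ (this slightly affects the lower bound $\log_{\|S\|^{2kp}}2$ on the Beurling dimension obtained at the end of the paper, which deteriorates as $k$ grows), but for the bare existence claim both are equally adequate. Your side checks are also sound: the monotonicity of $\binom{kn}{i}$ on $i\le n-1<kn/2$ indeed holds for $k\ge 2$, and the greedy code automatically covers the trivial case $n=1$.
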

\begin{proof} This is a consequence of the Gilbert-Varshamov  bound \cite{Lin99} which states that if $A_{q}(m, d)$ is the maximum possible size of a
$q$-ary code $C$ with length $m$ and minimum Hamming distance $d$ (a $q$-ary code is a code over the field ${\mathbb F}_{q}$ with $q$-elements), then
$$
A_{q}(m, d) \geq \frac{q^{m}}{\sum_{j=0}^{d-1}C_{m}^{j}(q-1)^{j}},
$$
where $C_{m}^{j}$ are the binomial coefficients.

In our case,  $q=2$, $d = n$ and $m = kn$. Let $H(x) = -x\log_{2}x - (1-x)\log_{2}(1-x)$. Using the argument in \cite{BIPW10} we can show that for $k > 2$
$$
 \sum_{j=0}^{n}C_{m}^{j} < 2^{H(\frac{1}{k})kn}.
$$
In fact, let $u = \frac{1}{k}$. Then
$$
2^{-H(u)} = \left(\frac{u}{1-u}\right)^{u}(1 - u)
$$
and so
$$
1 = (u + (1-u))^{kn} > \sum_{j=0}^{n}C_{kn}^{j}u^{j}(1-u)^{kn-j}= \sum_{j=0}^{n}C_{kn}^{j}\left(\frac{u}{1-u}\right)^{j}(1-u)^{kn}$$
$$> \sum_{j=0}^{n}C_{kn}^{j}\left(\frac{u}{1-u}\right)^{n}(1-u)^{kn}
  = \sum_{j=0}^{n}C_{kn}^{j}\left[\left(\frac{u}{1-u}\right)^{u}(1-u)\right]^{kn}= 2^{-H(u)kn}\sum_{j=0}^{n}C_{kn}^{j}$$

Thus we get $$
 \sum_{j=0}^{n}C_{kn}^{j} < 2^{H(\frac{1}{k})kn},
$$
hence
$$
A_{2}(kn, n) \geq \frac{2^{kn}}{\sum_{j=0}^{n-1}C_{kn}^{j}} \geq  \frac{2^{kn}}{\sum_{j=0}^{n}C_{kn}^{j}}\geq 2^{(1-H(\frac{1}{k}))kn}.
$$
Since $1-H(\frac{1}{k})\rightarrow1$ as $k\rightarrow\infty$, there exists a $k_0$ such that for $k\geq k_0$, $(1-H(\frac{1}{k}))k\geq 1$ and
$$
A_{2}(kn, n) \geq 2^n.
$$
\end{proof}

\begin{proof}[Proof of Theorem \ref{thb1}]
To complete the proof of Theorem \ref{thb1}, we construct a set $\Lambda$ that satisfies the hypothesis of Lemma \ref{lemb3}.

Let $A,p,\rho$ as in Lemma \ref{lemb1} where $A$ has two non-zero elements.  Without loss of generality, by Remark \ref{rem1} we may assume that $\rho < 1/4$.

We choose a sequence $q_1,q_2,\dots $ of natural numbers such that $q_1+\dots+q_{n-1}+1\leq q_n$, for all $n$; for example $q_n=2^n$.

Let $k\geq k_{0}$ where $k_{0}$ is as in Lemma \ref{lemb4}, let $\Lambda_{n}$ be the set of words of $A^{kq_n}$ guaranteed by Lemma \ref{lemb4} with at least $2^{q_n}$ elements and the Hamming distance between any two words is at least $q_n$.

We define $\Lambda$ by concatenating words in $\Lambda_i$ as follows
$$\Lambda:=\{\lambda_1\dots\lambda_n : \lambda_i\in \Lambda_i, n\geq 1\}.$$

Fix $\lambda=\lambda_1\dots\lambda_N$; we wish to estimate
$$ \sum_{\lambda' \in \Lambda \setminus \{\lambda\}} \rho^{d_{p}(\lambda, \lambda')}. $$

For any $\lambda'=\lambda_1'\dots\lambda_m' \in \Lambda$, there exists a natural number $r(\lambda')$ which is the largest index such that $\lambda_r\neq \lambda_r'$. The Hamming distance between $\lambda_{r}$ and $\lambda'_{r}$ is at least $q_{r(\lambda')}$, so the Hamming distance between $\lambda$ and $\lambda'$ is also at least $q_{r(\lambda')}$. Thus, for a fixed $r_{0} \in \mathbb{N}$, we count how many $\lambda' \in \Lambda$ which have $r(\lambda') = r_{0}$.  If $r_{0} \leq N$, the number of possibilities is

$$2^{q_1}+2^{q_1+q_2}+\dots+2^{q_1+\dots+q_{r_{0}}}\leq \sum_{i=0}^{q_1+\dots+q_{r_{0}}}2^{i}\leq 2^{q_1+\dots+q_{r_{0}}+1}\leq 2^{2q_{r_{0}}}=4^{q_{r_{0}}}.$$

If $r_{0}>N$, then $\lambda_{n+1} = \dots = \lambda_{r_{0}} = 0 = \lambda_{r_{0}+1}'$ with $\lambda_{r_{0}}' \neq 0$, so the number of possibilities is at most
$$2^{q_1+\dots+q_{r_{0}}}\leq 2^{2q_{r_{0}}}=4^{q_{r_{0}}}.$$

It follows that
$$\sum_{\lambda'\in\Lambda\setminus\{\lambda\}}\rho^{d_p(\lambda,\lambda')}\leq \sum_{r=1}^\infty 4^{q_r}\rho^{q_r}.$$
Since $\rho<1/4$, this sum converges.

We now associate words in $\Lambda$ with integers written in base $S^{p}$ with coefficients from the words in $\Lambda$ as in Definition \ref{defb1}.  Combining Lemmas \ref{lemb2} and \ref{lemb3} we conclude that $\Lambda$ is a Bessel spectrum. Taking $q_1$ larger if needed, we can get the sum to be less than 1, so we obtain a Riesz basic spectrum.

It remains to prove that the Beurling dimension is positive.  We will use the following lemma that can be obtained by a straightforward computation.
\begin{lemma}\label{lembe}
To compute the Beurling dimension, the unit cube $[-1,1]^d$ in Definition \ref{deff3} can be replaced by any bounded set $Q$ that contains $0$ in the interior.
\end{lemma}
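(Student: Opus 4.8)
The plan is to compare the counting quantity built from $Q$ with the one built from the cube $[-1,1]^d$ by sandwiching $Q$ between two dilates of that cube. Since $Q$ is bounded and contains $0$ in its interior, there exist constants $0<c_1\le c_2<\infty$ with
$$c_1[-1,1]^d\subseteq Q\subseteq c_2[-1,1]^d.$$
The right inclusion holds because boundedness gives $Q\subseteq B(0,M)$ for some $M$, and $B(0,M)\subseteq M[-1,1]^d$; the left inclusion holds because interiority gives a ball $B(0,\varepsilon)\subseteq Q$, and a sufficiently small dilate of the cube fits inside that ball.

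First I would introduce the two suprema
$$f(h):=\sup_{x\in\br^d}\#(\Lambda\cap(x+h[-1,1]^d)),\qquad g(h):=\sup_{x\in\br^d}\#(\Lambda\cap(x+hQ)),$$
and write $\mathcal D_r^{+,Q}(\Lambda):=\limsup_{h\to\infty}g(h)/h^r$ for the density computed from $Q$. Scaling the inclusions above by $h$, translating by $x$, and intersecting with $\Lambda$ gives, for every $x$ and every $h>0$,
$$\#(\Lambda\cap(x+hc_1[-1,1]^d))\le\#(\Lambda\cap(x+hQ))\le\#(\Lambda\cap(x+hc_2[-1,1]^d)).$$
Taking the supremum over $x$ in each member yields the clean sandwich $f(c_1h)\le g(h)\le f(c_2h)$.

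Next I would turn these inequalities into a two-sided comparison of densities by rescaling the dilation parameter. From $g(h)\le f(c_2h)$ I write $g(h)/h^r=c_2^r\,f(c_2h)/(c_2h)^r$, and taking $\limsup_{h\to\infty}$—using that $c_2h$ ranges over all large reals as $h\to\infty$, so the limit superior is unaffected by the fixed positive dilation—gives $\mathcal D_r^{+,Q}(\Lambda)\le c_2^r\,\mathcal D_r^+(\Lambda)$. The same rescaling applied to $f(c_1h)\le g(h)$ gives $c_1^r\,\mathcal D_r^+(\Lambda)\le\mathcal D_r^{+,Q}(\Lambda)$. Hence
$$c_1^r\,\mathcal D_r^+(\Lambda)\le\mathcal D_r^{+,Q}(\Lambda)\le c_2^r\,\mathcal D_r^+(\Lambda).$$

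Finally, since $r>0$ and $0<c_1\le c_2<\infty$, the factors $c_1^r$ and $c_2^r$ lie in $(0,\infty)$, so for each fixed $r$ the quantity $\mathcal D_r^{+,Q}(\Lambda)$ is positive (respectively finite) exactly when $\mathcal D_r^+(\Lambda)$ is. Consequently the sets $\{r>0:\mathcal D_r^+(\Lambda)>0\}$ and $\{r>0:\mathcal D_r^{+,Q}(\Lambda)>0\}$ coincide, as do the corresponding sets defined by the finiteness condition, so the supremum/infimum defining $\dim^+(\Lambda)$ is unchanged. I expect no serious obstacle: this is essentially a bookkeeping argument, and the only point demanding genuine care is the limsup rescaling, where one must observe that multiplying the argument $h$ by a fixed positive constant leaves the limit superior as $h\to\infty$ intact.
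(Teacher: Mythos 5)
Your proposal is correct, and it is precisely the ``straightforward computation'' the paper alludes to without writing out: the paper gives no proof of Lemma \ref{lembe}, and the natural argument is exactly your sandwich $c_1[-1,1]^d\subseteq Q\subseteq c_2[-1,1]^d$, the resulting two-sided bound $c_1^r\,\mathcal D_r^+(\Lambda)\le\mathcal D_r^{+,Q}(\Lambda)\le c_2^r\,\mathcal D_r^+(\Lambda)$, and the observation that finite positive constants do not affect positivity or finiteness of the $r$-densities, hence leave the dimension unchanged.
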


Consider the elements in $\Lambda_1\dots\Lambda_n$. There are at least $2^{q_n}$ such elements, since just $\Lambda_n$ has $2^{q_n}$ elements.
The length of such a word is $kq_1+\dots+kq_n\leq 2kq_n-1$. Let $C=\max_{a\in A}\|a\|$. Then the integer represented by this word will have absolute value less than
$$C+\|S\|^pC+\dots+\|S\|^{p\cdot (2kq_n-1)}C\leq D \|S\|^{2kq_np}$$
for some constant $D$.

Therefore, in the ball of radius $D\|S\|^{2kq_np}$ there are at least $2^{q_n}$ elements in $\Lambda$. Using $Q=B(0,D)$ in Lemma \ref{lembe}, $x=0$, and $h=\|S\|^{2kq_np}$ in the Definition \ref{deff3}, this implies that the Beurling dimension of $\Lambda$ is at least
$\log_{\|S\|^{2kp}}2>0$.
\end{proof}

We conclude with a remark concerning the usual Cantor middle third set.  This set, and its invariant measure, is generated by the iterated function system with parameters $R = 3$, $B = \{0, 2\}$, and $p_{0} = p_{2} = \frac{1}{2}$.  The invariant measure $\mu_{B,p}$ for these parameters is the measure $\nu$ mentioned in item $(ii)$ in the introduction; for this measure there are no three pairwise orthogonal complex exponentials, and hence this measure possesses no orthonormal sequence of exponentials \cite{JP98}.  However, by applying Theorem \ref{thb1}, this measure on the Cantor set does possess a Riesz basic sequence of complex exponentials.

\newpage
\bibliographystyle{alpha}
\bibliography{spectral}

\end{document}